\documentclass[11pt]{amsart}

\usepackage[top=1.5in, bottom=1in, left=0.9in, right=0.9in]{geometry}

\usepackage{amscd,amsmath,amssymb,fancyhdr,color}
\usepackage[utf8]{inputenc}
\usepackage{amsfonts}
\usepackage{amsthm}
\usepackage{graphicx}
\usepackage{float}
\usepackage{subcaption}
\usepackage{verbatim}
\usepackage{indentfirst}
\usepackage{tikz}
\usepackage{tikz-cd}
\usetikzlibrary{matrix}
\usepackage[all]{xy}
\usepackage{enumerate}
\usepackage{csquotes}

\usepackage[backref=page]{hyperref}
\renewcommand*{\backref}[1]{}
\renewcommand*{\backrefalt}[4]{%
	\ifcase #1 (Not cited.)%
	\or        (Cited on page~#2.)%
	\else      (Cited on pages~#2.)%
	\fi}

\hypersetup{
	colorlinks   = true,
	citecolor    = magenta
}

\numberwithin{equation}{section}

\def\eqref#1{(\ref{#1})}

\newcommand{\C}{{\mathbb C}}
\newcommand{\R}{{\mathbb R}}

\def\1{\sqrt{-1}\:}

\newcommand{\cntrct}                % contraction with a vector field
{\hspace{2pt}\raisebox{1pt}{\text{$\lrcorner$}}\hspace{2pt}}

\renewcommand{\dim}{\operatorname{dim}}

\renewcommand{\Im}{\operatorname{Im}}

\renewcommand{\to}{\longrightarrow}

\newcounter{Mycounter}[section]
\newcounter{lemma}[section]
\newcounter{claim}[section]
\newcounter{sublemma}[section]
\newcounter{corollary}[section]
\newcounter{theorem}[section]
\newcounter{conjecture}[section]
\newcounter{proposition}[section]
\newcounter{definition}[section]
\newcounter{example}[section]
\newcounter{remark}[section]
\newcounter{problem}[section]
\newcounter{question}[section]
\makeatletter

\@addtoreset{equation}{section}

\@addtoreset{footnote}{section}

\makeatother

\usetikzlibrary{arrows,chains,matrix,positioning,scopes}

\makeatletter
\tikzset{join/.code=\tikzset{after node path={%
			\ifx\tikzchainprevious\pgfutil@empty\else(\tikzchainprevious)%
			edge[every join]#1(\tikzchaincurrent)\fi}}}
\makeatother

\tikzset{>=stealth',every on chain/.append style={join},
	every join/.style={->}}

\makeatletter
\newtheorem*{rep@theorem}{\rep@title}
\newcommand{\newreptheorem}[2]{%
	\newenvironment{rep#1}[1]{%
		\def\rep@title{\ref{##1}}%
		\begin{rep@theorem}}%
		{\end{rep@theorem}}}
\makeatother

\newreptheorem{theorem}{Theorem}

\begin{document}
	
	\newpage
	
	\title[Stein neighborhood bases for complex--totally real unions]{Stein neighborhood bases for complex--totally real unions}
	
	\author{Ovidiu Preda}
	\address{Ovidiu Preda \newline
		\textsc{\indent University of Bucharest, Faculty of Mathematics and Computer Science\newline 
			\indent 14 Academiei Str., Bucharest, Romania \newline
			\indent \indent and \newline
			\indent Institute of Mathematics ``Simion Stoilow'' of the Romanian Academy\newline 
			\indent 21 Calea Grivitei Street, 010702, Bucharest, Romania}}
	\email{ovidiu.preda@fmi.unibuc.ro; ovidiu.preda@imar.ro}
	
		\thanks{     \\[.1cm]
		{\bf Keywords:} Stein neighborhood bases, totally real submanifolds,
		plurisubharmonic functions, Oka--Weil approximation. \\
		{\bf 2020 Mathematics Subject Classification:} Primary 32E10; Secondary 32U05
		}
	
	\date{\today}

	\begin{abstract}
		We study Stein neighborhood bases for unions of closed complex and totally real submanifolds, with particular attention to a linear configuration with noncompact intersection. We prove that the set $X=\{w=0\}\cup\{\Im z=\Im w=0\}\subset\C^2$ admits a Stein neighborhood basis. The proof starts from the explicit plurisubharmonic defining function $\rho(z,w)=(\Im w)^2+\sinh^2(\Im z)|w|^2$ and obtains the necessary non-uniform control at infinity by adding a locally uniformly convergent series of nonnegative plurisubharmonic terms constructed by Oka--Weil approximation. We also give an example of a closed complex submanifold and a closed totally real submanifold in $\C^2$ with compact intersection whose union has no Stein neighborhood basis, showing that the unrestricted union problem has a negative answer. 
	\end{abstract}
	
	\maketitle
	
	\hypersetup{linkcolor=blue}
	\tableofcontents

	\section{Introduction}
	
	Let $S$ be a closed subset of a complex manifold $Y$. A Stein neighborhood
basis of $S$ is a family of open Stein sets such that for every open set $U\subset Y$ containing
$S$ there is an open Stein set $\Omega$ in the family satisfying $S\subset\Omega\subset U$.
The existence of such a basis is very useful in several
complex variables, because it permits one to replace a geometrically singular set by
Stein domains on which the standard tools of holomorphic approximation and
sheaf cohomology are available.

The two types of submanifolds considered here are individually well
understood. Siu proved that every Stein subvariety admits a Stein neighborhood
\cite{Siu1976}. On the other hand, classical results on complexifications and
holomorphic approximation on totally real submanifolds give Stein
neighborhood bases for closed totally real submanifolds
\cite{Grauert1958,NirenbergWells1969,RangeSiu1974}. However, it does not necessarily follow 
that the union of such sets has the same property: analytic
continuation across the complex piece may interact with the geometry and
holomorphic hulls of the totally real piece.

An important precursor to our work is Boonstra's dissertation \cite[Chapter 3]{Boonstra1995}. In the
handlebody part of that work, Boonstra constructed strictly pseudoconvex
neighborhoods for standard configurations consisting of a strongly
pseudoconvex domain and an attached flat totally real handle. Forstneri\v{c}
and Kozak subsequently gave explicit strongly pseudoconvex neighborhoods of
quadratic strongly pseudoconvex domains with attached flat totally real
handles \cite{ForstnericKozak2003}. These results are closely related in
spirit to the present problem, but their complex piece is a full-dimensional
domain and the handle is attached along its boundary. 
In the model considered here,
the complex piece has positive codimension and its intersection with the
totally real piece is a noncompact real line. 
Slapar \cite{Slapar2004} constructed regular Stein neighborhood bases
for compact real surfaces smoothly embedded in complex surfaces whose
complex points are finite in number and all flat and hyperbolic.
He also applied his methods to certain unions of two totally real
planes in $\C^2$ intersecting only at the origin.
Starčič later constructed regular Stein
neighborhoods for the union of two totally real planes
$M=(A+iI)\R^2$ and $N=\R^2$ in $\C^2$, provided that the entries of the
real matrix $A$ are sufficiently small \cite{Starcic2016}. Although both
components in that setting are totally real, the use of strongly
pseudoconvex sublevel sets is close in spirit to the present construction. 

A second line of results concerns compact complex curves and holomorphic
convexity. Drinovec Drnov\v{s}ek and Forstneri\v{c} proved that a compact
complex curve with $\mathcal C^2$ boundary (and no boundaryless irreducible components) 
in a complex space admits a basis of open Stein neighborhoods \cite{DrinovecDrnovsekForstneric2007}.
Forstneri\v{c} later established a union theorem combining a compact bordered
complex curve $A$ (which has no boundaryless irreducible components) 
with a compact holomorphically convex set $K$, under the
decisive condition that $A\cap K$ be $\mathcal O(A)$-convex
\cite{Forstneric2022}. Relatedly, Forstneri\v{c} proved earlier a Stein-neighborhood
theorem for a closed Stein subvariety together with a compact holomorphically
convex set, again assuming holomorphic convexity of the intersection
\cite{Forstneric2005}. These theorems identify the absence of holes in the
intersection as a natural condition preventing a Hartogs phenomenon.

Col\c{t}oiu explicitly asked whether the union of the complex line
$\C\times\{0\}$ with the totally real plane $\R^2$ has a basis of
Stein neighborhoods in $\C^2$, and, more generally, whether the union
of a closed complex submanifold and a closed totally real submanifold
always admits a basis of Stein neighborhoods
\cite[Problem 5]{coltoiu_open_pb}. The noncompact intersection setting
requires particular care. For example, Kazama proved that
$\C^k\times\R^\ell$ has no Stein neighborhood basis in
$\C^k\times\C^\ell$ when $k,\ell\geq1$ \cite{Kazama1983}. Thus neither the
separate neighborhood theorems nor a naive product intuition settles the
union problem.

\vspace{10pt}

The purpose of this paper is twofold. First, we prove a positive result for
the basic noncompact linear configuration in $\C^2$. With coordinates
$(z,w)=(x+iy,u+iv)$, set
$$
A=\C\times\{0\},\qquad
M=\{(z,w)\in\C^2:y=0,\ v=0\},\qquad
X=A\cup M.
$$
Here $A$ is a complex line, $M$ is a maximal totally real plane, and
$A\cap M=\R\times\{0\}$ is noncompact. The proof begins with the explicit
nonnegative plurisubharmonic defining function
$\rho(z,w)=v^2+\sinh^2(y)|w|^2$, whose zero set is exactly $X$.
Since fixed sublevel sets of $\rho$ do not provide the required nonuniform
control at infinity, for an arbitrary neighborhood $U$ of $X$ we retain
$\rho$ as the initial term and add a locally uniformly convergent series of
functions $(\Im f_{n,j})^2$, where $f_{n,j}=wh_{n,j}$ and
$h_{n,j}$ is a polynomial with real coefficients. A localized Oka--Weil
argument makes each new batch arbitrarily small on the balls already treated
while making it large on the next compact part of $\C^2\setminus U$. This
yields a continuous plurisubharmonic function $\Psi_U$ such that
$$
\Psi_U|_X=0,\qquad
\Psi_U>1\quad\hbox{on }\C^2\setminus U.
$$
The connected component of $\{\Psi_U<1\}$ containing $X$ is then the
required Stein neighborhood.

Second, we show that the unrestricted statement is false. Namely, for
$A=\C\times\{0\}$ and $M=\{(z,w):|z|=1,\ w\in\R\}$, every Stein
neighborhood of $A\cup M$ contains $\overline\Delta\times\R$. Hence this
union has no Stein neighborhood basis. The obstruction is visible in the
intersection: $A\cap M=\partial\Delta\times\{0\}$ has a hole in the complex
line $A$. This gives a negative answer to Col\c{t}oiu's question without
additional hypotheses and places the positive linear model above in the same
holomorphic-convexity framework as the compact results of Forstneri\v{c} and
the joint work of Drinovec Drnov\v{s}ek and Forstneri\v{c}.

\vspace{10pt}

Section~\ref{sec:prelim} recalls the plurisubharmonic and
polynomial-convexity tools used in the proof. Section~\ref{sec:main}
establishes the positive model, and the following section gives the
Hartogs-type counterexample and discusses the role of the intersection.

	\section{Preliminaries}\label{sec:prelim}
	
	Throughout, $\mathcal O(Y)$ denotes the algebra of holomorphic functions on
a complex manifold $Y$, and all Euclidean balls are closed unless otherwise
stated.

\begin{definition}\label{def:Stein-basis}
Let $S$ be a closed subset of a complex manifold $Y$. We say that $S$ has a
Stein neighborhood basis in $Y$ if, for every open neighborhood $U$ of $S$ in
$Y$, there is an open Stein set $\Omega$ satisfying
$S\subset\Omega\subset U$. 
\end{definition}

\begin{definition}\label{def:totally-real}
A $\mathcal C^1$ real submanifold $M$ of an $n$-dimensional complex manifold
$Y$ is totally real if $T_pM\cap iT_pM=\{0\}$ for every $p\in M$. It is
maximal totally real if $\dim_{\R}M=n$.
\end{definition}

\subsection{Plurisubharmonic functions}

Let $u$ be a real-valued $\mathcal C^2$ function on an open set $V\subset \C^n$.
For $p\in V$ and $\xi\in\C^n$, its Levi form is
$$
\mathcal L_u(p;\xi)
=\sum_{j,k=1}^n
\frac{\partial^2u}{\partial z_j\partial\overline z_k}(p)
\xi_j\overline{\xi_k}.
$$
The function $u$ is plurisubharmonic if $\mathcal L_u(p;\xi)\geq0$ for every
$p$ and $\xi$, and it is strictly plurisubharmonic if the inequality is strict
for $\xi\neq0$. For continuous functions, plurisubharmonicity is defined by 
subharmonicity on complex lines. A continuous function $u$ is strictly plurisubharmonic 
if, locally near every point, $u-\delta|z|^2$ is plurisubharmonic for some $\delta>0$.

We shall repeatedly use the following elementary facts. If $f$ is
holomorphic, then $\Im f$ is pluriharmonic and
$\mathcal L_{(\Im f)^2}(p;\xi)=\frac12|df_p(\xi)|^2$; hence
$(\Im f)^2$ is plurisubharmonic. A locally uniformly convergent
sequence of continuous plurisubharmonic functions has a plurisubharmonic
limit. Also, if $u$ is plurisubharmonic and $\chi$ is convex and increasing,
then $\chi\circ u$ is plurisubharmonic whenever the composition is defined.

\begin{proposition}\label{prop:psh-sublevel}
Let $\psi$ be a continuous plurisubharmonic function on $\C^n$ and let
$c\in\R$. Every connected component of $\{\psi<c\}$ is Stein.
\end{proposition}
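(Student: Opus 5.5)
Let $D$ be a connected component of $\{\psi<c\}$. Since $\psi$ is continuous, $\{\psi<c\}$ is open, and so is $D$, because components of open subsets of $\C^n$ are open. The plan is to show that $D$ is pseudoconvex, i.e.\ that it carries a continuous plurisubharmonic exhaustion function. Then the solution of the Levi problem in $\C^n$ (Oka--Bremermann--Norguet) gives that $D$ is a domain of holomorphy, hence Stein.

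The candidate exhaustion combines three terms:
$$
\varphi(z)=|z|^2+\frac{1}{c-\psi(z)}+\max\bigl(-\log \operatorname{dist}(z,\partial D),\,0\bigr)\quad(z\in D),
$$
with the convention that the last term is simply omitted if $D=\C^n$.
\begin{itemize}
\item The first term is strictly plurisubharmonic.
\item The second term is $\chi\circ\psi$ with $\chi(t)=1/(c-t)$, which is convex and increasing on $(-\infty,c)$. By the composition fact recalled above, it is continuous and plurisubharmonic on $D$.
\end{itemize}

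The main obstacle is the boundary behaviour. The term $1/(c-\psi)$ blows up only at points of $\partial D$ where $\psi=c$. Every boundary point of $D$ has $\psi\ge c$: a point with $\psi<c$ has a small connected neighbourhood in $\{\psi<c\}$, which would then lie in $D$. By continuity, therefore, $\psi=c$ on $\partial D$, so the second term does tend to $+\infty$ at every finite boundary point.

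The remaining problem is that $\varphi$ must be an exhaustion, i.e.\ its sublevel sets $\{\varphi<t\}$ must be relatively compact in $D$. Take $K_t=\{z\in D:\varphi(z)\le t\}$.
\begin{itemize}
\item $K_t$ is bounded because of the $|z|^2$ term.
\item $K_t$ stays away from $\partial D$: a sequence in $K_t$ converging to a point of $\partial D$ would force $c-\psi\to0$, contradicting the bound $1/(c-\psi)\le t$.
\end{itemize}
Hence $K_t$ is compact in $D$. This step works without the distance term, so I would drop it and take
$$
\varphi=|z|^2+\frac{1}{c-\psi}.
$$
This $\varphi$ is a continuous, strictly plurisubharmonic exhaustion of $D$.

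Finally, I would invoke the standard theorem that an open set in $\C^n$ admitting a continuous plurisubharmonic exhaustion function is pseudoconvex, and hence Stein by the solution of the Levi problem (e.g.\ via Hörmander's $L^2$ theory). If one prefers to avoid the continuous version, Richberg's theorem smooths $\varphi$ to a smooth strictly plurisubharmonic exhaustion, and then Grauert's characterization applies.
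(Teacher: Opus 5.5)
Your proposal is correct and follows the paper's proof. Both use the exhaustion $|z|^2+1/(c-\psi)$, which is strictly plurisubharmonic and tends to $+\infty$ both at infinity and at the finite boundary, where $\psi=c$. Both then invoke the solution of the Levi problem in its continuous-exhaustion form. Your explicit check that $\psi=c$ on the boundary of the chosen component $D$ is a detail the paper leaves implicit. As you noticed yourself, the distance term is unnecessary.
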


\begin{proof}
On $D=\{\psi<c\}$ the function
$\tau(\zeta)=|\zeta|^2+1/(c-\psi(\zeta))$ is a continuous strictly
plurisubharmonic exhaustion. Indeed, $t\mapsto1/(c-t)$ is convex and
increasing on $(-\infty,c)$; its composition with $\psi$ tends to infinity
at the finite boundary of $D$, while $|\zeta|^2$ tends to infinity at
infinity. The Levi problem, in its continuous-exhaustion form, shows that
every connected component of $D$ is Stein; see, for example,
\cite[Theorem~2.6.7, p.~46, and Theorem~4.2.8, p.~88]{Hormander1990}.
\end{proof}

\subsection{Holomorphic and polynomial convexity}

\begin{definition}\label{def:hulls}
If $K$ is a compact subset of a complex manifold $Y$, its
$\mathcal O(Y)$-hull is
$$
\widehat K_{\mathcal O(Y)}
=\{p\in Y:|f(p)|\leq\max_K|f|
   \text{ for every }f\in\mathcal O(Y)\}.
$$
The set $K$ is $\mathcal O(Y)$-convex if
$\widehat K_{\mathcal O(Y)}=K$. For $Y=\C^n$, this hull agrees with the
polynomial hull, and we say that $K$ is polynomially convex.
\end{definition}

A polynomial polyhedron in $\C^n$ is a compact set of the form
$P=\{z:|p_j(z)|\leq1,\ 1\leq j\leq N\}$ for finitely many polynomials
$p_1,\ldots,p_N$. We use the following standard consequences of
polynomial convexity and the Oka--Weil theorem
\cite[Lemma~2.7.4, p.~53, and Theorem~2.7.7, p.~55]{Hormander1990}.

\begin{proposition}\label{prop:Oka-Weil}
Let $K\subset\C^n$ be compact and polynomially convex. Every function holomorphic on a
neighborhood of $K$ can be approximated uniformly on $K$ by polynomials.
Moreover, every open neighborhood of $K$ contains a polynomial polyhedron
$P$ such that $K\subset\mathring{P}$. If $K$ and its neighborhood are
invariant under complex conjugation, then $P$ may be chosen invariant under
complex conjugation.
\end{proposition}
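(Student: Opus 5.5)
The statement to prove is Proposition~\ref{prop:Oka-Weil}. It has three parts: Oka--Weil approximation on $K$, the existence of a polynomial polyhedron $P$ with $K\subset\mathring P\subset P\subset U$ for a given neighborhood $U$, and the conjugation-invariant refinement. The first two parts are quoted from \cite[Lemma~2.7.4 and Theorem~2.7.7]{Hormander1990}, so the real work is the third part; I will still sketch how the second is obtained, since the third builds on it.

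For the polyhedron, let $U$ be an open neighborhood of $K$, and shrink $U$ to be bounded. Choose $R$ with $U\subset\{|z_j|<R\}$ and set $L=\{|z_j|\le R\text{ for all } j\}\setminus U$, which is compact and disjoint from $K$. For each $q\in L$, since $q\notin K=\widehat K$, there is a polynomial $p_q$ with $|p_q(q)|>1>\max_K|p_q|$, after rescaling. The inequality $|p_q|>1$ is an open condition, so finitely many of these polynomials cover $L$. Take them together with the coordinate polynomials $z_j/R$. The resulting polyhedron $P$ lies in $U$. It contains $K$ in its interior because every defining polynomial has modulus strictly less than $1$ on $K$: for $z_j/R$ this holds because $K\subset U$ lies in the open polydisc.

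For conjugation invariance, assume $\overline K=K$ and $\overline U=U$, where the bar denotes the map $\sigma(z)=\bar z$. For a polynomial $p$, set $p^*(z)=\overline{p(\bar z)}$; this is again a polynomial, and $|p^*(z)|=|p(\bar z)|$. Given the family $p_1,\dots,p_N$ produced above, enlarge it to $p_1,\dots,p_N,p_1^*,\dots,p_N^*$ and let $P'$ be the corresponding polyhedron. Then $P'=P\cap\sigma(P)$, which is visibly $\sigma$-invariant and still lies in $U$. Moreover $\max_K|p_j^*|=\max_{\sigma(K)}|p_j|=\max_K|p_j|<1$, so $K\subset\mathring{P'}$.

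The main obstacle is making sure that the strict inequalities survive each step, so that $K$ lands in the interior of the polyhedron rather than just inside it; keeping the maximum on $K$ strictly below $1$ for every defining polynomial handles this. The approximation statement needs no modification: it is exactly the Oka--Weil theorem as cited.
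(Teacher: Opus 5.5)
Your proposal is correct and follows essentially the same route as the paper. The approximation and polyhedron statements are taken from H\"ormander, and your sketch of the polyhedron construction is the standard one. The conjugation-invariant refinement is obtained exactly as in the paper: you replace $P$ by $P\cap\sigma(P)$, which amounts to adjoining the polynomials $p_j^*(z)=\overline{p_j(\bar z)}$, and you use $\sigma(K)=K$ to keep $K$ in the interior. As your argument also shows, invariance of $U$ itself is not needed, since $P\cap\sigma(P)\subset P\subset U$.
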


\vspace{10pt}

The additional conjugation-invariance assertion is an elementary consequence: after choosing a polynomial polyhedron $P$ with $K\subset\mathring{P}\subset P\subset U$, replace it by $P\cap\sigma(P)$, where $\sigma(z)=\overline z$. When $K$ is conjugation-invariant, this remains a polynomial polyhedron containing $K$ in its interior and contained in $U$.

\begin{lemma}\label{lem:add-points}
If $K\subset\C^n$ is compact, polynomially convex and
$F\subset\C^n\setminus K$ is finite, then $K\cup F$ is polynomially convex.
\end{lemma}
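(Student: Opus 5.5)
It suffices to add the points of $F$ one at a time: the set obtained after each step is again compact, polynomially convex, and disjoint from the remaining points of $F$. So I would reduce by induction to $F=\{a\}$ with $a\notin K$, and show that $L=K\cup\{a\}$ is polynomially convex. Fix $p\notin L$. The task is to find a polynomial $g$ with $|g(p)|>\max_L|g|$.

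For the case $p\neq a$, I would use a single product polynomial. Since $p\notin K=\widehat K$, there is a polynomial $f$ with $|f(p)|>1\geq\max_K|f|$. If also $|f(p)|>|f(a)|$ we are done, so the real work is when $|f(a)|$ is large. To handle it, choose an affine linear function $\ell$ with $\ell(a)=0$ and $\ell(p)\neq0$, which exists because $p\neq a$. Set $g=\ell f^N$. Then $g(a)=0$, and
$$
|g(p)|=|\ell(p)|\,|f(p)|^N,\qquad \max_K|g|\leq\max_K|\ell|,
$$
so taking $N$ large gives $|g(p)|>\max_K|g|$ because $|f(p)|>1$. Hence $|g(p)|>\max_L|g|$.

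For the case $p=a$, membership in $L$ is trivial. So every point outside $L$ is separated by a polynomial, and $\widehat L=L$.

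An alternative route is to use the characterization of hulls via polynomial polyhedra in Proposition~\ref{prop:Oka-Weil}, but the direct argument above avoids it. I see no real obstacle. The only point requiring care is the choice of $\ell$ vanishing at $a$ but not at $p$, so that the large value $|f(a)|$ is killed while $|f(p)|^N$ still dominates on $K$.
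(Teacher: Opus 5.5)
Your argument is correct and is essentially the paper's proof. The paper also multiplies a high power of a polynomial peaking at the point relative to $K$ by a polynomial $r$ with $r(q)=1$ and $r|_F=0$, but it kills all of $F$ at once, whereas you reduce to one point by induction and take $r$ to be a single affine function $\ell$; your separate case $p=a$ is vacuous, since $p\notin L$ already forces $p\neq a$.
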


\begin{proof}
Fix $q\notin K\cup F$. Choose a polynomial $p$ with $p(q)=1$ and
$\max_K|p|<1$, and choose a polynomial $r$ with $r(q)=1$ and $r|_F=0$.
For sufficiently large $N$, the polynomial $rp^N$ has value $1$ at $q$ and
absolute value less than $1$ on $K\cup F$. Thus $q$ does not belong to the
polynomial hull of $K\cup F$.
\end{proof}

We now record the localization lemma which is an essential part of our main construction. In the
rest of the section, denote by $(z,w)$ the coordinates on $\C^2$, put $A=\{w=0\}$,
$M=\{\Im z=\Im w=0\}$, and
$X=A\cup M\subset\C^2$.

\begin{lemma}\label{lem:localized-polynomials}
Let $B_R$ be the closed Euclidean ball centered at the origin of radius $R$ in $\C^2$, let
$E$ be a compact subset of $\C^2\setminus(B_R\cup X)$, and let
$\epsilon>0$. There are finitely many polynomials
$f_1,\ldots,f_m$ of the form $f_j(z,w)=wh_j(z,w)$, with
$h_j\in\R[z,w]$, such that
$$
\sum_{j=1}^m(\Im f_j)^2>2
\quad\hbox{on }E,
\qquad
\sum_{j=1}^m|f_j|^2<\epsilon
\quad\hbox{on }B_R.
$$
\end{lemma}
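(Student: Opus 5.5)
The plan is to reduce to a pointwise construction via Oka--Weil, and to fix the compactness cover \emph{before} doing any approximation, so that the number $m$ of polynomials is known when the smallness on $B_R$ is arranged.

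\emph{Choosing targets.} Fix $q=(z_q,w_q)\in E$. Since $q\notin A$, we have $w_q\neq0$. Set $a_q=iT/w_q$ with $T=2$. Then $\Im(w_qa_q)=T$, so $(\Im(wa_q))^2\ge 3$ on a small open ball around $q$. If $q$ is a real point ($q=\bar q$), we need $a_q$ real. This is possible: $w_q$ is not real, since otherwise $q\in M$. So in that case choose $a_q\in\R$ with $\Im(w_qa_q)=2$. If $q\neq\bar q$, use the point $\bar q$ together with the value $\overline{a_q}$. By Lemma~\ref{lem:add-points}, the set $K_q=B_R\cup\{q,\bar q\}$ is polynomially convex, because $B_R$ is convex and $|q|=|\bar q|>R$. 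It is also conjugation-invariant. By Proposition~\ref{prop:Oka-Weil}, $K_q$ has a conjugation-invariant polynomial polyhedron $P_q$ inside an arbitrarily small conjugation-invariant neighborhood of $K_q$. Such a $P_q$ splits into disjoint compact pieces: one near $B_R$, one near $q$, and one near $\bar q$. I choose $P_q$ so small that on the piece near $q$ we have $(\Im(wa_q))^2>3$, and so that the pieces are pairwise disjoint. On a neighborhood of $P_q$ define the holomorphic function $\varphi_q$ by $0$ near $B_R$, $a_q$ near $q$, and $\overline{a_q}$ near $\bar q$. It satisfies $\varphi_q(\bar\zeta)=\overline{\varphi_q(\zeta)}$. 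Let $V_q$ be the interior of the piece of $P_q$ containing $q$.

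\emph{Covering, then approximating.} By compactness, $E\subset V_{q_1}\cup\dots\cup V_{q_m}$ for finitely many points, and now $m$ is fixed. For each $j$, Proposition~\ref{prop:Oka-Weil} applies because $P_{q_j}$ is polynomially convex. It gives a polynomial $p_j$ with $|p_j-\varphi_{q_j}|<\eta$ on $P_{q_j}$. Symmetrize by setting $h_j(\zeta)=\tfrac12\bigl(p_j(\zeta)+\overline{p_j(\bar\zeta)}\bigr)$. Then $h_j\in\R[z,w]$, and by the conjugation symmetry of both $P_{q_j}$ and $\varphi_{q_j}$ the estimate $|h_j-\varphi_{q_j}|<\eta$ still holds on $P_{q_j}$. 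Put $f_j=wh_j$. On $B_R$ we get $|f_j|\le R\eta$, so $\sum_j|f_j|^2\le mR^2\eta^2<\epsilon$ once $\eta$ is small. On $V_{q_j}$ we have $|f_j-wa_{q_j}|\le C\eta$, with $C$ a bound for $|w|$ on the compact set $E$ (or on $P_{q_j}$). Hence $(\Im f_j)^2>2$ there for small $\eta$, and so $\sum_j(\Im f_j)^2>2$ on $E$.

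\emph{Main obstacle.} The delicate point is the order of quantifiers. The neighborhoods on which $(\Im f)^2>2$ must not depend on the approximating polynomial, since otherwise $m$ could grow as $\eta$ shrinks. Defining $V_q$ through the fixed model function $wa_q$ and the fixed polyhedron $P_q$, before any approximation is done, resolves this. The remaining care is in the real-point case, where the reality requirement on $h$ forces $a_q\in\R$. Exactly the exclusion $q\notin M$ (so $w_q\notin\R$) makes $\Im(w_qa_q)\neq0$ achievable there, and $q\notin A$ is what is needed in general.
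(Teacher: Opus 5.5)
Your proof is correct and follows the paper's argument essentially step for step: the polynomially convex set $B_R\cup\{q,\bar q\}$, a conjugation-invariant polyhedron split into three pieces, a conjugation-symmetric holomorphic target vanishing near $B_R$, and a finite cover fixed before the symmetrized Oka--Weil approximation. The only variation is that you prescribe the constant $2i/w_q$ near $q$ and shrink that piece so that $(\Im(wa_q))^2>3$, whereas the paper prescribes $i\Lambda/w$ so that $wH_p\equiv i\Lambda$ exactly; one correction is that your ``real point'' case is vacuous, since $q=\bar q$ means precisely $q\in M$, and your claim that $q\notin M$ forces $w_q\notin\R$ is false (take $q=(i,1)$) --- but your main case $q\neq\bar q$ uses only $w_q\neq0$, so nothing breaks.
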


\begin{proof}
Fix $p\in E$. Since $p\notin X$, we have $w(p)\neq0$ and
$p\neq\overline p$. By \ref{lem:add-points}, the compact set
$B_R\cup\{p,\overline p\}$ is polynomially convex. Choose pairwise
disjoint open sets $V_0,V_p,V_{\overline p}$ such that
$B_R\subset V_0$, $p\in V_p$, $\overline p\in V_{\overline p}$,
$\sigma(V_0)=V_0$, and $\sigma(V_p)=V_{\overline p}$, where
$\sigma(z,w)=(\overline z,\overline w)$. We also choose them so that
$V_p\cup V_{\overline p}$ does not meet $\{w=0\}$.

By \ref{prop:Oka-Weil}, there is a conjugation-invariant polynomial
polyhedron $P_p$ satisfying
$$
B_R\cup\{p,\overline p\}
\subset\mathring{P}_p
\subset P_p
\subset V_0\cup V_p\cup V_{\overline p}.
$$
On a neighborhood of $P_p$, prescribe the holomorphic function $H_p$ to be
$0$ on the part in $V_0$, $i\Lambda/w$ on the part in $V_p$, and
$-i\Lambda/w$ on the part in $V_{\overline p}$, where $\Lambda>2$ is fixed.
This function satisfies
$H_p(z,w)=\overline{H_p(\overline z,\overline w)}$.

Choose a neighborhood $N_p$ of $p$ whose closure is contained in
$V_p\cap\mathring{P}_p$. Carrying out this construction for every
$p\in E$, choose $p_1,\ldots,p_m$ such that
$N_{p_1},\ldots,N_{p_m}$ cover $E$.

By Oka--Weil, for each $j$ approximate $H_{p_j}$ uniformly on $P_{p_j}$ by
a polynomial $q_j$. Replacing $q_j$ with
$$
h_j(z,w)=\frac12\bigl(q_j(z,w)+
\overline{q_j(\overline z,\overline w)}\bigr),
$$
we obtain a polynomial with real coefficients and preserve the approximation.
Since $wH_{p_j}=i\Lambda$ near $\overline{N}_{p_j}$ and $H_{p_j}=0$ near
$B_R$, the approximations may be chosen close enough that, for
$f_j=wh_j$, we have $(\Im f_j)^2>2$ on $N_{p_j}$ and
$|f_j|^2<\epsilon/m$ on $B_R$. The two asserted inequalities follow.
\end{proof}

\subsection{Analytic discs}

We shall use the following form of the maximum principle. If
$g:\overline\Delta\to Y$ is continuous, holomorphic on $\Delta$, and $u$ is
plurisubharmonic on a neighborhood of $g(\overline\Delta)$, then
$u\circ g$ is subharmonic and
$\max_{\overline\Delta}u\circ g\leq\max_{\partial\Delta}u\circ g$.
Consequently, if $u$ is a plurisubharmonic exhaustion of a Stein domain and
the boundary circles of a family of analytic discs remain in a fixed compact
set, then the discs remain in a fixed compact sublevel set of $u$.

	\section{The main result}\label{sec:main}
	
	\begin{theorem}\label{thm_main}
		In $\C^2$ with coordinates $(z,w)=(x+iy,u+iv)$, we let $X=\{w=0 \}\cup \{y=0, v=0 \}$, which is the union of the complex plane $\C$ with a totally real plane $\R^2$. Then $X$ has a Stein neighborhood basis.
	\end{theorem}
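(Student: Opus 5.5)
Fix an open neighborhood $U$ of $X$. The plan is to build a continuous plurisubharmonic function $\Psi_U$ on $\C^2$ with $\Psi_U|_X=0$ and $\Psi_U>1$ on $\C^2\setminus U$. Then the connected component $\Omega$ of $\{\Psi_U<1\}$ containing the connected set $X$ satisfies $X\subset\Omega\subset U$, and $\Omega$ is Stein by Proposition~\ref{prop:psh-sublevel}.

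The starting term is $\rho(z,w)=v^2+\sinh^2(y)|w|^2$. It vanishes exactly on $X$: $\rho=0$ forces $v=0$, and then either $w=0$ or $y=0$. It is plurisubharmonic because $v^2=(\Im w)^2$, and $\sinh^2(y)|w|^2=|\sinh(y)w|^2$, where $\log|\sinh(y) w|=\log|\sinh y|+\log|w|$. Here $\log|\sinh y|$ is a convex function of $y$, hence psh, so $|\sinh(y)w|^2$ is log-psh and thus psh. (I would double check this with a direct Levi form computation.) Since $\rho$ is positive off $X$, on each compact set $K_n=(\overline{B_{n+1}}\setminus \mathring B_n)\setminus U$ it is bounded below by a positive constant. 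However, no single sublevel set works uniformly at infinity, so correction terms are needed.

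The correction terms come from Lemma~\ref{lem:localized-polynomials}, applied with $R=n$ and $E=E_n:=K_{n}$, $n=1,2,\dots$. Note that $K_n\subset\C^2\setminus(B_n\cup X)$, which is what the lemma requires. For tolerance $\epsilon_n=2^{-n}$ it gives real-coefficient polynomials $h_{n,j}$ with $f_{n,j}=wh_{n,j}$ such that
\[
\sum_j(\Im f_{n,j})^2>2 \text{ on } E_n,\qquad \sum_j|f_{n,j}|^2<2^{-n}\text{ on }B_n.
\]
Set
\[
\Psi_U=\rho+\sum_{n\ge1}\sum_j(\Im f_{n,j})^2 .
\]
This has the required properties:
\begin{itemize}
\item \emph{Convergence.} On each $B_N$, all batches with $n\ge N$ are bounded by $2^{-n}$, so the series converges locally uniformly. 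Each term is psh, so $\Psi_U$ is continuous and psh.
\item \emph{Vanishing on $X$.} On $A$ each $f_{n,j}=0$. On $M$ the coordinates $z,w$ are real, so $f_{n,j}$ is real because $h_{n,j}$ has real coefficients, and hence $\Im f_{n,j}=0$. Also $\rho=0$ on $X$, so $\Psi_U|_X=0$.
\item \emph{Size off $U$.} Every point of $\C^2\setminus U$ outside $B_1$ lies in some $E_n$, where $\Psi_U>2$.
\end{itemize}

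The main obstacle is the part of $\C^2\setminus U$ inside $B_1$ (or in general some initial ball). There $\rho\ge c>0$ on the compact set $\overline{B_1}\setminus U$, but $c$ may be less than $1$. I would handle this by replacing $\rho$ with $\rho/c'$, where $c'$ is the minimum of $\rho$ on that compact set. Alternatively, I would apply the lemma once more with $R=0$ (trivially, with $B_0$ a point, or by shrinking) to cover $\overline{B_1}\setminus U$ as well. Either way the inequality $\Psi_U>1$ on all of $\C^2\setminus U$ holds, which finishes the construction. The delicate point to verify is that the compact sets $E_n$ avoid $X\cup B_n$ and exhaust $\C^2\setminus(U\cup B_1)$; this holds by construction.
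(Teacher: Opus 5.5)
Your architecture is the paper's: you start from $\rho$, add summable batches $(\Im f_{n,j})^2$ from Lemma~\ref{lem:localized-polynomials}, and take the component of $\{\Psi_U<1\}$ containing $X$. The paper chooses its batches inductively via the defect sets $E_n=\{p\in K\cap B_n:\Psi_{n-1}(p)\le1\}$, while you use fixed shells; that difference is harmless.

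\textbf{Plurisubharmonicity of $\rho$.} Your justification that $\rho$ is plurisubharmonic does not work. The function $\log|\sinh y|$ is not convex: its second derivative is $-1/\sinh^2 y<0$ for $y\neq0$, and it tends to $-\infty$ at $y=0$. In fact $\sinh^2(y)|w|^2$ is not plurisubharmonic on its own. With $\phi=\sinh^2 y$, the Levi determinant of $\phi|w|^2$ is $|w|^2\phi^2(\log\phi)_{z\overline z}=-\tfrac12\sinh^2(y)|w|^2<0$ whenever $y\neq0$ and $w\neq0$. So $\rho$ does not split into two plurisubharmonic summands, and the term $v^2$ is essential. It adds $\tfrac12$ to the $w\overline w$ entry, making that entry $\tfrac12\cosh(2y)$. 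The full determinant then becomes $\tfrac{|w|^2}{4}\bigl(\cosh^2(2y)-\sinh^2(2y)\bigr)=\tfrac{|w|^2}{4}\ge0$. This direct Levi computation, which is what the paper does, is required, not just a double check.

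Alternatively, you can drop $\rho$ altogether. Since $0\in X\subset U$, the closed set $K=\C^2\setminus U$ misses some ball $B_r$. One extra batch from the lemma with $R=r$ and $E=K\cap B_1$ then handles the initial region. The series of batches alone is then a valid $\Psi_U$.

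\textbf{Application of the lemma.} You misapply the lemma. Your shell $K_n=(\overline{B_{n+1}}\setminus\mathring B_n)\setminus U$ contains the points of $K$ with $|p|=n$, and these lie in the closed ball $B_n$. So $K_n\not\subset\C^2\setminus(B_n\cup X)$. At such points the lemma's two conclusions ($>2$ on $E$ and $<2^{-n}$ on $B_n$) contradict each other. Apply the lemma with $R=n-1$ (or $R=n-\tfrac12$) instead. Then on $B_N$ every batch with $n\ge N+1$ is below $2^{-n}$, and your convergence argument goes through unchanged. The paper's defect sets avoid this issue automatically: $\Psi_{n-1}>1$ on $K\cap B_{n-1}$ forces $E_n\cap B_{n-1}=\varnothing$.

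\textbf{Strict inequality.} Replacing $\rho$ by $\rho/c'$ only gives $\rho/c'\ge1$ on $\overline{B_1}\setminus U$. Use $2\rho/c'$ to get the strict inequality.

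With these corrections your proof is a valid variant of the paper's.
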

	\begin{proof}
Consider $\rho(z,w)=v^2+\sinh^2(y)\,|w|^2$. Its zero set is exactly $X$. Indeed, if $\rho=0$, then $v=0$, and either $w=0$ or $y=0$.
Next, we prove that $\rho$ is plurisubharmonic. For simplicity, we write $h(y)=\sinh^2 y$.
The Levi matrix of $\rho$ is
\[ \mathcal L_\rho(z,w)=
\begin{bmatrix}
	\dfrac14 h''(y)|w|^2 & \dfrac{1}{2i}h'(y)w\\[2mm]
	-\dfrac{1}{2i}h'(y)\overline w & h(y)+\dfrac12
\end{bmatrix}.
\]
Since
\[ 
h'(y)=\sinh(2y),\quad
h''(y)=2\cosh(2y),\quad
h(y)+\frac12=\frac12\cosh(2y),
\]
the determinant is
\[ \det \mathcal L_\rho(z,w)=
\frac{|w|^2}{4}
\left[h''(y)\left(h(y)+\tfrac12\right)-h'(y)^2\right]
=
\frac{|w|^2}{4}
\left[\cosh^2(2y)-\sinh^2(2y)\right]
=
\frac{|w|^2}{4}\geq 0.
\]
Since the diagonal entries and the determinant of $\mathcal{L}_{\rho}$
are nonnegative, all its principal minors are nonnegative. Hence $\mathcal{L}_{\rho}$ 
is positive semidefinite, and therefore $\rho$ is plurisubharmonic.
The noncompactness of $X$ means that the uniform sublevel sets
$\{\rho<\varepsilon\}$ do not by themselves form a neighborhood basis.
We obtain the required nonuniform control by adding localized
plurisubharmonic terms to $\rho$.

Let $U$ be an arbitrary open neighborhood of $X$, put
$K=\C^2\setminus U$, and let $B_n$ denote the closed Euclidean ball centered 
at the origin of radius $n$. If $K=\varnothing$, take $\Omega_U=\C^2$ and the proof is
complete. Hence assume that $K\ne\varnothing$. If
$K\cap B_1\ne\varnothing$, choose $C>0$ such that
$C\rho>1$ on $K\cap B_1$. This is possible because $K\cap B_1$ is
compact and disjoint from $X=\{\rho=0\}$. If $K\cap B_1=\varnothing$,
take $C=1$. Set $\Psi_1=C\rho$.

Suppose inductively, for some $n\geq2$, that $\Psi_{n-1}$ is a
continuous nonnegative plurisubharmonic function which vanishes on $X$
and satisfies $\Psi_{n-1}>1$ on $K\cap B_{n-1}$. Set
$$
E_n=\{p\in K\cap B_n:\Psi_{n-1}(p)\leq1\}.
$$
The set $E_n$ is compact. Moreover, the induction hypothesis gives
$E_n\cap B_{n-1}=\varnothing$, while $K\cap X=\varnothing$ gives
$E_n\cap X=\varnothing$. Hence
$E_n\subset\C^2\setminus(B_{n-1}\cup X)$.

If $E_n=\varnothing$, set $m_n=0$ and $\Psi_n=\Psi_{n-1}$.
Otherwise, apply \ref{lem:localized-polynomials} with $R=n-1$ and
$\epsilon=2^{-n}$. We obtain polynomials
$f_{n,j}=wh_{n,j}$, where $h_{n,j}$ has real coefficients, such that
$$
\sum_{j=1}^{m_n}(\Im f_{n,j})^2>2
\quad\hbox{on }E_n,
\qquad
\sum_{j=1}^{m_n}|f_{n,j}|^2<2^{-n}
\quad\hbox{on }B_{n-1}.
$$
Define
$$
\Psi_n=\Psi_{n-1}
+\sum_{j=1}^{m_n}(\Im f_{n,j})^2.
$$
Each added summand is nonnegative and plurisubharmonic. It vanishes on
$A=\{w=0\}$ because $f_{n,j}$ is divisible by $w$, and it vanishes on
$M=\{y=0,v=0\}$ because $z,w\in\R$ there and $h_{n,j}$ has real
coefficients. Thus $\Psi_n$ vanishes on $X=A\cup M$. On $E_n$ the new
batch is greater than $2$, while on $(K\cap B_n)\setminus E_n$ we
already have $\Psi_{n-1}>1$. Consequently, $\Psi_n>1$ on
$K\cap B_n$. Furthermore,
$$
0\leq
\sum_{j=1}^{m_n}(\Im f_{n,j})^2
\leq
\sum_{j=1}^{m_n}|f_{n,j}|^2
<2^{-n}
\quad\hbox{on }B_{n-1}.
$$

For every fixed $N$, all batches with $n\geq N+1$ are uniformly bounded
by $2^{-n}$ on $B_N$. Therefore $(\Psi_n)$ converges locally uniformly
on $\C^2$ to the continuous plurisubharmonic function
$$
\Psi=C\rho+
\sum_{n=2}^{\infty}\sum_{j=1}^{m_n}
(\Im f_{n,j})^2,
$$
where an empty batch is interpreted as zero. Every summand vanishes on
$X$, so $\Psi|_X=0$. If $p\in K$, choose $n$ such that $p\in B_n$.
Then $\Psi(p)\geq\Psi_n(p)>1$. Hence $D=\{\Psi<1\}$ is an open
neighborhood of $X$ contained in $U$.

By \ref{prop:psh-sublevel}, every connected component of $D$ is Stein.
Since $A$ and $M$ are connected and $A\cap M\ne\varnothing$, the set
$X=A\cup M$ is connected. It is therefore contained in one connected
component $\Omega_U$ of $D$. Thus $X\subset\Omega_U\subset U$. Since
$U$ was arbitrary, $X$ has a Stein neighborhood basis.
\end{proof}

\section{A simple counterexample to the unrestricted statement}

Let $A=\mathbb C\times\{0\}\subset\mathbb C^2$ and $M=\{(z,w): |z|=1,\ w\in\mathbb R\}$. Then $A$ is a closed complex submanifold, and $M\cong S^1\times\mathbb R$ is a closed smooth totally real submanifold of $\mathbb C^2$. Set $X=A\cup M$.

For every \(t\in\mathbb R\), consider the analytic disc
\[
f_t:\overline{\Delta}\to\mathbb C^2,\qquad f_t(\zeta)=(\zeta,t).
\]
Its boundary satisfies $f_t(\partial\Delta)\subset M$, and for \(t=0\), $f_0(\overline{\Delta})\subset A$. Now let \(\Omega\) be any Stein neighborhood of \(X\). Let \(\psi\) be a plurisubharmonic exhaustion of \(\Omega\). Define $I=\{t\in\mathbb R: f_t(\overline{\Delta})\subset\Omega\}$. Then \(0\in I\). The set \(I\) is open by compactness. It is also closed: if \(t_j\to t_0\) and \(t_j\in I\), then the boundary circles
\[
f_t(\partial\Delta),\qquad |t-t_0|\le \epsilon,
\]
lie in a compact subset of \(M\subset\Omega\), so \(\psi\) is bounded above there. By the maximum principle applied to \(\psi\circ f_{t_j}\), the discs \(f_{t_j}(\overline{\Delta})\) stay in a fixed compact sublevel set of \(\psi\). Passing to the limit gives $f_{t_0}(\overline{\Delta})\subset\Omega$. Hence \(I=\mathbb R\).

Therefore every Stein neighborhood of $X$ contains $\overline{\Delta}\times\mathbb R$. In particular, every Stein neighborhood of $X$ contains $(0,1)$. But $(0,1)\notin X$. Taking $U=\mathbb C^2\setminus \overline{B((0,1),\varepsilon)}$
with $\varepsilon>0$ small gives an open neighborhood of $X$ containing no Stein neighborhood of $X$. Thus $X$ has no Stein neighborhood basis.

\vspace{15pt}

This is exactly the Hartogs-type obstruction: here $A\cap M=\{(z,0): |z|=1\}$ has a hole inside $A$. The compact analogue appears in Forstneri\v{c}'s extension of Siu's theorem: if $X_0$ is a closed Stein subvariety, $K$ is compact and holomorphically convex in a Stein neighborhood, and $K\cap X_0$ is $\mathcal O(X_0)$-convex, then $K\cup X_0$ has a Stein neighborhood basis. The same paper explains how failure of the intersection condition produces a bidisc-type Hartogs obstruction \cite{Forstneric2005}.

\section{Some remarks}

\begin{remark}\label{rem:localization-criterion}
	The additive argument generalizes in the following form. Let
$Y\subset\C^n$ be closed and connected, and suppose that there is a continuous
nonnegative plurisubharmonic function $\rho$ on $\C^n$ such that
$Y=\{\rho=0\}$. Assume in addition the following localization property:
for every $R>0$, every compact set
$E\subset\C^n\setminus(B_R\cup Y)$, and every $\epsilon>0$, there is a
continuous nonnegative plurisubharmonic function $\theta$ on $\C^n$ such
that
$$
\theta|_Y=0,\qquad \theta>2\quad\hbox{on }E,
\qquad \theta<\epsilon\quad\hbox{on }B_R.
$$
Then $Y$ has a Stein neighborhood basis.

Indeed, given an open neighborhood $U\supset Y$, one starts with a suitable
multiple $C\rho$ and repeats the defect-set construction $E_n$ from the proof
of \ref{thm_main}, applying the localization property at the $n$th step and
choosing the new term smaller than $2^{-n}$ on $B_{n-1}$. The resulting
series converges locally uniformly to a continuous
plurisubharmonic function $\Psi$ which vanishes on $Y$ and is greater than
$1$ on $\C^n\setminus U$. By \ref{prop:psh-sublevel}, the connected
components of $\{\Psi<1\}$ are Stein. The component containing the connected
set $Y$ is therefore a Stein neighborhood of $Y$ contained in $U$.
\end{remark}

\begin{remark}
	The model case of \ref{thm_main} can be generalized to all dimensions. Let
	$\mathbb C^n=\mathbb C^p_z\times\mathbb C^q_w$, where $p+q=n$.
	The subscripts $z$ and $w$ label the two coordinate blocks. Thus a point of
	$\mathbb C^n$ is written as $(z,w)$, where
	$z=(z_1,\ldots,z_p)\in\mathbb C^p$ and
	$w=(w_1,\ldots,w_q)\in\mathbb C^q$. Write $z=x+iy$ and
	$w=\sigma+it$, where $x,y\in\mathbb R^p$ and
	$\sigma,t\in\mathbb R^q$.
	All norms below are Euclidean. In particular,
	\[
	|\Im z|^2=|y|^2=\sum_{j=1}^p y_j^2,\qquad
	|\Im w|^2=|t|^2=\sum_{k=1}^q t_k^2,\qquad
	|w|^2=\sum_{k=1}^q|w_k|^2.
	\]
	Set $A_0=\mathbb C^p\times\{0\}$ and take the maximal totally real plane
	$M_0=\mathbb R^p\times\mathbb R^q$. Thus $A_0$ is given by $w=0$,
	where $0$ denotes the zero vector in $\mathbb C^q$, while $M_0$ is given by
	$\Im z=\Im w=0$. Let
	$h(z)=(e^{|\Im z|^2}-1)/2$ and define
	$\rho(z,w)=|\Im w|^2+h(z)|w|^2$.
	The function $\rho$ is nonnegative. It is straightforward to verify that $\{\rho=0\}=A_0\cup M_0$.
	
	We now write its Levi matrix explicitly. Regard $w$ and
	$\partial h=(h_{z_1},\ldots,h_{z_p})^T$ as column vectors, let $I_m$ denote
	the $m\times m$ identity matrix, and let the superscripts $T$ and $*$ denote
	transpose and Hermitian transpose, respectively. With the variables ordered
	as $(z,w)$, we have
	\[
	\mathcal L_\rho(z,w)=
	\begin{bmatrix}
		|w|^2\mathcal L_h(z) & (\partial h(z))w^T\\[2mm]
		\overline w\,(\partial h(z))^* &
		\left(h(z)+\dfrac12\right)I_q
	\end{bmatrix},
	\]
	where $\mathcal L_h=(h_{z_j\overline z_k})_{j,k=1}^p$. Indeed, the Levi
	matrix of $|\Im w|^2$ is $\frac12I_q$,
	$\rho_{z_j\overline z_k}=|w|^2h_{z_j\overline z_k}$, and
	$\rho_{z_j\overline w_\ell}=h_{z_j}w_\ell$. Put
	$g=h+\frac12=\frac12e^{|\Im z|^2}$.
	
	For clarity, recall that if
	$L=\left[\begin{smallmatrix}A&B\\ B^*&D\end{smallmatrix}\right]$ is
	Hermitian and $D>0$, then the Schur complement of $D$ in $L$ is
	$A-BD^{-1}B^*$, and $L\geq0$ if and only if
	$A-BD^{-1}B^*\geq0$. In the present case $D=gI_q>0$. Since
	$$g\mathcal L_h-\partial h(\partial h)^*
	=g^2\mathcal L_{|\Im z|^2}=\frac{g^2}{2}I_p,$$ its Schur
	complement is
	\[
	\begin{aligned}
		S
		&=|w|^2\mathcal L_h
		-\frac{|w|^2}{g}\,\partial h(\partial h)^*\\
		&=\frac{|w|^2}{g}
		\left(g\mathcal L_h-\partial h(\partial h)^*\right)
		=\frac12g|w|^2I_p\geq0.
	\end{aligned}
	\]
	It follows that $\mathcal L_\rho\geq0$, so $\rho$ is plurisubharmonic.
	
	This is precisely the block-matrix version of the calculation in the proof
	of \ref{thm_main}. Indeed, when $p=q=1$ and $h=h(y)$, one has
	$h_z=h'(y)/(2i)$ and $h_{z\overline z}=h''(y)/4$, and the preceding block
	matrix becomes
	\[
	\mathcal L_\rho(z,w)=
	\begin{bmatrix}
		\dfrac14h''(y)|w|^2 & \dfrac{1}{2i}h'(y)w\\[2mm]
		-\dfrac{1}{2i}h'(y)\overline w & h(y)+\dfrac12
	\end{bmatrix}.
	\]
	This is exactly the form of the Levi matrix in that proof, although the
	particular function $h$ is different. For a $2\times2$ Hermitian matrix
	$\left[\begin{smallmatrix}a&b\\ \overline b&g\end{smallmatrix}\right]$
	with $g>0$, the Schur-complement inequality
	$a-|b|^2/g\geq0$ is equivalent to $ag-|b|^2\geq0$, that is, to the
	nonnegativity of the determinant. Thus in the scalar case it is exactly the
	principal-minor criterion used in the proof of \ref{thm_main}.
	
	The localized approximation argument also extends to this model. Indeed, if
	$a\notin A_0\cup M_0$, then some coordinate $w_k(a)$ is nonzero and
	$a\ne\overline a$. In the proof of \ref{lem:localized-polynomials}, one
	replaces the factor $w$ by $w_k$ and performs the same conjugation-symmetric
	Oka--Weil approximation. The resulting holomorphic polynomials have the form
	$f=w_kq$, with $q$ having real coefficients. Hence $f$ vanishes on $A_0$
	and is real-valued on $M_0$, so the nonnegative plurisubharmonic correction
	$(\Im f)^2$ vanishes on $A_0\cup M_0$. Consequently, the
	localization property in \ref{rem:localization-criterion} holds, and
	$A_0\cup M_0$ has a Stein neighborhood basis.
	
	\vspace{10pt}
	
	A slightly more general model is this. For integers $0\leq s\leq p$ and
	$0\leq r\leq q$, split
	$z=(z',z'')\in\mathbb C^s\times\mathbb C^{p-s}$ and
	$w=(u,v)\in\mathbb C^r\times\mathbb C^{q-r}$. Here $z'$, $z''$, $u$, and
	$v$ are vectors in the indicated complex Euclidean spaces. In particular,
	$u$ and $v$ are complex coordinate blocks of $w$; they are not its real
	and imaginary parts. More explicitly,
	\[
	\begin{aligned}
		|\Im z'|^2
		&=\sum_{j=1}^s(\Im z'_j)^2,
		&\qquad |z''|^2
		&=\sum_{j=1}^{p-s}|z''_j|^2,\\
		|\Im u|^2
		&=\sum_{j=1}^r(\Im u_j)^2,
		& |v|^2
		&=\sum_{k=1}^{q-r}|v_k|^2.
	\end{aligned}
	\]
	Define
	\[
	A_0=\{w=0\}=\{u=0,\ v=0\},\qquad
	M_0=\{z'\in\mathbb R^s,\ z''=0,\ u\in\mathbb R^r,\ v=0\}.
	\]
	Then $M_0$ is totally real and $A_0\cap M_0$ is naturally identified with
	$\mathbb R^s$. Put
	$\tau(z)=|\Im z'|^2+|z''|^2$,
	$h(z)=(e^{\tau(z)}-1)/2$, and
	$\rho(z,u,v)=|\Im u|^2+|v|^2
	+h(z)(|u|^2+|v|^2)$.
	Again $\rho\geq0$. As in the previous case, a straightforward computation shows that
	$\{\rho=0\}=A_0\cup M_0$.
	
	We also give the Levi calculation in this case. Set
	$R=|u|^2+|v|^2$ and $g=h+\frac12=\frac12e^\tau$. With the variables
	ordered as $(z,u,v)$, the Levi matrix is
	\[
	\mathcal L_\rho(z,u,v)=
	\begin{bmatrix}
		R\mathcal L_h
		&(\partial h)u^T&(\partial h)v^T\\[2mm]
		\overline u\,(\partial h)^*
		&gI_r&0\\[2mm]
		\overline v\,(\partial h)^*
		&0&(h+1)I_{q-r}
	\end{bmatrix}.
	\]
	Indeed, $|\Im u|^2$ contributes $\frac12I_r$ to the
	$u\overline u$ block, whereas $|v|^2$ contributes $I_{q-r}$ to the
	$v\overline v$ block. Thus the lower-right block is
	$D=\operatorname{diag}(gI_r,(h+1)I_{q-r})\geq gI_q>0$. As above,
	$g\mathcal L_h-\partial h(\partial h)^*=g^2\mathcal L_\tau$, where
	$\mathcal L_\tau=\operatorname{diag}(\frac12I_s,I_{p-s})>0$. Since
	$h+1\geq g$, the Schur complement of $D$ satisfies
	\[
	\begin{aligned}
		S
		&=R\mathcal L_h-
		\left(\frac{|u|^2}{g}+\frac{|v|^2}{h+1}\right)
		\partial h(\partial h)^*\\
		&\geq R\mathcal L_h-\frac{R}{g}\,
		\partial h(\partial h)^*\\
		&=\frac{R}{g}
		\left(g\mathcal L_h-\partial h(\partial h)^*\right)
		=Rg\mathcal L_\tau\geq0.
	\end{aligned}
	\]
	Therefore $\mathcal L_\rho\geq0$, and $\rho$ is plurisubharmonic. This is the same
	block Schur-complement argument as for the first model.
	
	For completeness, the localized approximation can be obtained as follows.
	At a point $a\notin A_0\cup M_0$, if $v_k(a)\ne0$ for some $k$, use a
	correction of the form $f=v_kq$; its factor $v_k$ vanishes on both $A_0$
	and $M_0$. If $v(a)=0$ but $z''_k(a)\ne0$, the fact that
	$a\notin A_0$ implies that $u(a)\ne0$. Choose $j$ with $u_j(a)\ne0$ and
	use $f=u_jz''_kq$. This function vanishes on $A_0$ because $u=0$ there,
	and on $M_0$ because $z''=0$ there. In these two cases, we use ordinary 
	Oka--Weil approximation without conjugation symmetrization. For the chosen factor 
	$P=v_k$ or $P=u_jz''_k$, we prescribe $0$ near $B_R$ and $i\Lambda/P$ near $a$, 
	and repeat the polynomial-polyhedron and finite-cover construction from 
	\ref{lem:localized-polynomials}.
	
	In the remaining case
	$v(a)=z''(a)=0$, choose $j$ with $u_j(a)\ne0$. Since
	$a\notin A_0$, such a $j$ exists; and since $a\notin M_0$, either $z'$ or
	$u$ has a nonreal coordinate, so $a\ne\overline a$. The
	conjugation-symmetric construction applied to $u_jq$, with $q$ having real
	coefficients, gives a holomorphic function $f=u_jq$ which vanishes on $A_0$
	and is real-valued on $M_0$. In all three cases the resulting
	plurisubharmonic correction $(\Im f)^2$ vanishes on
	$A_0\cup M_0$. The proof of \ref{lem:localized-polynomials} now applies in
	each case, so \ref{rem:localization-criterion} shows that this linear model
	also has a Stein neighborhood basis.
\end{remark}

	\vspace{20pt}

\textbf{Disclosure of AI assistance.} The author acknowledges the use of general-purpose large language models as an aid in documentation, bibliographic searches, testing the suitability of various candidate functions during the development of the proof, and proof-checking.

\end{document}